\newcommand{\cov}{\nabla}
\newcommand{\dm}{\diamondsuit}
\newcommand{\Id}{\mathrm{Id}}
\newcommand{\lie}{\mathcal{L}}
\newcommand{\lap}{\Delta}
\newcommand{\wdd}{{\wedge}}
\newcommand{\eps}{\epsilon}
\newcommand{\R}{\mathbb{R}}
\newcommand{\shuffle}{\mathrm{Sh}}
\newcommand{\vf}{\mathfrak{X}}
\DeclareMathOperator{\tr}{\mathrm{C}}
\DeclareMathOperator{\Tr}{\mathrm{tr}}
\newtheorem{theorem}{Theorem}
\newtheorem{proposition}[theorem]{Proposition}
\newtheorem{corollary}[theorem]{Corollary}
\title{Generalized Goldberg Formula}
\author[A. De Nicola]{Antonio De Nicola}
 \address{CMUC, Department of Mathematics, University of Coimbra, 3001-501 Coimbra, Portugal}
 \email{antondenicola@gmail.com}
\author[I. Yudin]{Ivan Yudin}
 \address{CMUC, Department of Mathematics, University of Coimbra, 3001-501 Coimbra, Portugal}
 \email{yudin@mat.uc.pt}
\subjclass[2000]{Primary 53C25, 53D35}
\thanks{Research partially supported by CMUC -- UID/MAT/00324/2013, funded by the Portuguese
 Government through FCT/MEC and co-funded by the European Regional Development Fund through the Partnership Agreement PT2020 (A.D.N. and I.Y.), by MICINN (Spain) grants MTM2011-15725-E, MTM2012-34478 (A.D.N.), and by the exploratory research project in the frame of Programa Investigador FCT IF/00016/2013 (I.Y.).}
\begin{document}

\begin{abstract}
In this paper we prove a useful formula for the graded commutator of the Hodge
codifferential with the left wedge multiplication by a fixed $p$-form acting on
the de Rham algebra of a Riemannian manifold. Our formula generalizes a formula
stated by Samuel I. Goldberg for the case of 1-forms. As first examples of
application we obtain new identities on locally conformally K\"ahler manifolds
and quasi-Sasakian manifolds. Moreover, we prove that under suitable conditions
a certain subalgebra of differential forms in a compact manifold is quasi-isomorphic as a CDGA to the full de Rham algebra.
\end{abstract}

\maketitle

\section{Introduction}

Since the early days of Differential Geometry the importance of formulae that relate various
differential objects on a manifold has been apparent.
Let us mention among others the Bianchi identities, Weitzenb\"ock formulae, and
Fr\"olicher-Nijenhuis calculus. It should be noted that all the above results
can be obtained by elementary, although tedious and long, computations.  Their
importance lies in the psychological and practical plane, as they permit to work with the
quantities in question without undergoing error-prone calculations, thus
forming a swiss-knife kit of a differential geometer.
In this article we prove a formula that we hope will deserve its place in the
kit.

Let $(M,g)$ be a Riemannian manifold. As usual, $\Omega^*(M)$ denotes the de Rham algebra of differential forms on $M$ and $\delta:\Omega^*(M)\to \Omega^{*-1}(M)$  the Hodge codifferential.
Given a $k$-form $\omega$, we denote by $\eps_\omega$ the operator on $\Omega^*(M)$ defined by
$\eps_\omega\theta=\omega\wedge\theta$, for every $\theta\in \Omega^l(M)$.
 In Theorem~\ref{main}, we prove the following expression for the graded commutator of $\delta$ with $\eps_\omega$ in terms of Fr\"olicher-Nijenhuis operators
(to be defined later)
\begin{equation}
	\label{main-formula}
		\left[ \delta, \eps_\omega \right] = \eps_{ \delta \omega}
		 - \lie_{\omega^\#} - \left( -1 \right)^k i_{\omega^\dm}.
\end{equation}
Here, $\omega^\#\in \Omega^{k-1}(M,TM)$ denotes the vector valued form  obtained
from \mbox{$\omega\in \Omega^k(M)$} by metric contraction on the last coordinate and $\omega^\dm\in \Omega^{k}(M,TM)$ is a vector valued $k$-form defined in Section~\ref{sectionHC}.

Let $\xi$ be a vector field and $\eta$ its metric dual $1$-form.  In
Corollary~\ref{cor:goldberg} we show that in this case Formula~\eqref{main-formula} takes the form
\begin{equation}
	\label{goldberg-introduction}	
\{\delta, \epsilon_\eta\} + \lie_\xi = \epsilon_{\delta \eta} +
	i_{(\lie_\xi g)^\#},
\end{equation}
where the curly bracket denotes the anticommutator.
Equation~\eqref{goldberg-introduction} was stated by Goldberg
in~\cite{goldberg-paper} and on page~109 of~\cite{goldberg-book}.
In both cases, Goldberg refrained from explicitly proving this result. Nevertheless, he
proved a partial case of~\eqref{goldberg-introduction} on pages 110-111
of~\cite{goldberg-book} under the condition that $\xi$ generates a flow of   conformal
transformations.
The absence of a published proof can be one of the causes that
Equation~\eqref{goldberg-introduction} is not widely known.

Let us give a simple example of use of~\eqref{main-formula}.
Let $(M,g,J)$ be a K\"ahler manifold and let $\Omega(X,Y)=g(X,JY)$ be its
fundamental 2-form. Then $\Omega^\# =J$ is parallel and $\Omega$ is closed and coclosed. One gets easily that the associated vector valued 2-form $\Omega^\dm$ vanishes (see equation~\eqref{diamond2}).   Thus
\eqref{main-formula} becomes
\begin{equation}\label{kahler}
	[\delta, \eps_\Omega] + \lie_J = 0.
\end{equation}
Upon complexification of $\Omega^*(M)$, we can write $d = \partial +
\bar\partial$ with
\begin{equation*}
\partial \colon \Omega^{p,q}(M) \to \Omega^{p+1,q}
(M),\quad \bar\partial\colon \Omega^{p,q}(M)\to \Omega^{p,q+1}(M).
\end{equation*}
 Since
$i_J \beta = (p-q)\mathbf{i} \beta$ for all $\beta\in \Omega^{p,q}(M)$, we get that
\begin{equation*}\begin{aligned}
\lie_J \beta= [i_J,d] \beta =  [i_J, \partial + \bar\partial] \beta  = -\mathbf{i} (\bar \partial - \partial)\beta.
\end{aligned}
\end{equation*}
Thus
\begin{equation}\label{known}
[\delta, \eps_\Omega] - d^c =0
\end{equation}
where $d^c = \mathbf{i}(\bar\partial - \partial)$.
This is of course a well-known formula in K\"ahler geometry, but usually it takes several pages of
local computations to prove it.

In Theorem~\ref{quasi-iso} we show the  importance of the condition
\begin{equation}
	\label{zero1}
	[\delta, \eps_\omega] + \lie_{\omega^\#} = 0
\end{equation}
for a $p$-form $\omega$. Namely, we prove that if \eqref{zero1} holds for all
$\omega\in S$, where $S$ is a subset of the de Rham algebra $\Omega^*(M)$ of a Riemannian manifold $(M,g)$, then the subalgebra
\begin{equation*}
	\Omega^*_{\lie_{S^\#}} (M) := \left\{\, \beta \,\middle|\,
	\lie_{\omega^\#} \beta =0,\ \forall\omega\in S
	\right\}
\end{equation*}
of $\Omega^*(M)$ is quasi-isomorphic to  $\Omega^*(M)$ as  a commutative
differential graded algebra
(CDGA), with the
quasi-isomorphism given by the embedding.
Then
the cohomology ring
of $\Omega^*_{\lie_{S^\#}}(M)$ is isomorphic to the de Rham cohomology ring of $M$.
Note that in the case $M$ is K\"ahler manifold, the above mentioned quasi-isomorphism is the first step in the proof of formality of
K\"ahler manifolds given in~\cite{formality}.

Employing our formula, in Theorem~\ref{paralelity} we give a complete characterization
of  all forms $\omega$ that satisfy the condition~\eqref{zero1}. Namely, we prove that a $p$-form $\omega$ on a Riemannian manifold $(M,g)$ satisfies \eqref{zero1} if and only if one of the following cases holds:
\begin{enumerate}[$(i)$]
	\item  $p=1$ and $\omega^\#$ is a Killing vector field;
	\item $p\ge 2$ and $\omega$ is parallel.
\end{enumerate}

In Section~\ref{s:lcK} we consider the case of locally conformal K\"ahler manifolds.
By applying Formula~\eqref{main-formula}, we get the following result which in a sense generalizes Equation~\eqref{kahler}. Let $(M,J,g)$ be a locally conformal K\"ahler manifolds with fundamental $2$-form $\Omega$, Lee $1$-form $\theta$, and anti-Lee $1$-form $\eta$. Then, for any $p$-form $\beta$ we have
\begin{equation}
	\label{eqin:lck}
	[\delta,\eps_\Omega]\beta = (p- n) \eta\wedge\beta - \lie_J\beta +
	\Omega\wedge i_{\theta^\#}\beta.
\end{equation}

Finally, in Section~\ref{qSm} we show how our formula works in the context of quasi-Sasakian manifolds.
In Theorem~\ref{thmqS} we prove the following result.
	Let $(M, \phi,\xi,\eta,g)$ be a quasi-Sasakian manifold and let $A := -\phi \circ \cov \xi$. Then
	\begin{equation}
		\label{qS2i}
			\left[ \delta, \eps_\Phi \right]  = - \Tr(A) \eps_\eta - \lie_\phi + 2 \eps_\eta i_A.
	\end{equation}
The special case of  Formula \eqref{qS2i} for Sasakian manifolds was first proved by Fujitani in~\cite{fujitani} by complicated computation in local coordinates. This formula was crucial
	for the proof of the main result in our recent article \cite{jdg} on Hard Lefschetz
	Theorem for Sasakian manifolds.
	We hope that \eqref{qS2i} will allow us to obtain a suitable
	generalization of Hard Lefschetz Theorem
for quasi-Sasakian manifold.

\section{Preliminaries}
\label{sectionFN}
In this section we remind the reader of some notions and results of Fr\"olicher-Nijenhuis calculus \cite{fnstrange,fn} which will be used later.

A \emph{ commutative differential graded algebra } $\left( A,d \right)$
(CDGA for short) is a graded algebra $A = \bigoplus_{k\ge 0} A_k$ over $\mathbb{R}$ such that for all $x\in A_k$
and $y\in A_l$ we have
\[
x y = \left( -1 \right)^{kl} yx,
\]
together with a differential $d$ of degree one, such that $d(xy)=d(x)y+(-1)^k x d(y)$ and $d^2 =0$.
Let $M$ be a smooth manifold of dimension $n$.
Then the direct sum
\begin{equation*}
\Omega^*(M):= \bigoplus_{k=1}^n \Omega^k(M) \end{equation*}
is a  CDGA with the multiplication given by  the wedge product $\wedge$ and the
differential given by the exterior
derivative ${d\colon \Omega^k(M)\to \Omega^{k+1}(M)}$.

Let $(A,d)$ be a CDGA.  We say that a linear operator $D\colon A\to A$
is a
\emph{derivation of degree $p$} if $D(A_k) \subset A_{k+p}$ for all $k$, and
\begin{equation*}
D(xy) = D(x) y + (-1)^{kp} x D(y)
\end{equation*}
for all $x\in A_k$ and $y\in A_l$.

We write $\Omega^k(M,TM)$ for the space of skew-symmetric $TM$-valued
$k$-forms on~$M$.
Denote by $\Sigma_m$ the permutation group on $\left\{ 1,\dots, m \right\}$.
For $k$ and $s$ such that ${k+s = m}$, let $\shuffle_{k,s}$ be the subset of
$(k,s)$-shuffles in $\Sigma_m$.
Thus for $\sigma\in \shuffle_{k,s}$, we have
\begin{align*}
\sigma (1) < \sigma (2) <\dots <\sigma (k), && \sigma (k+1) <\dots<\sigma (k+s).
\end{align*}
Let $\phi\in \Omega^p(M,TM)$. We define the operator $i_\phi$ of degree $p-1$ on
$\Omega^*(M)$ by
\begin{align*}
	\label{iphi}
	\left( i_\phi\omega \right)\left( Y_1,\dots,Y_{p+k-1} \right) =\!\!
	\!\!\sum_{\sigma\in \shuffle_{p,k-1}} \!\!\!\!  (-1)^\sigma \omega\left( \phi(Y_{\sigma
	(1)},\dots, Y_{\sigma (p)}), Y_{\sigma (p+1)}, \dots, Y_{\sigma (p+k-1)} \right)
\end{align*}
where $\omega\in \Omega^k(M)$. The Lie derivative $\lie_\phi$ is an operator of degree $p$ on
$\Omega^*(M)$ defined as the graded commutator $\left[ i_\phi, d \right]$.

We recall now the fundamental theorem of Fr\"olicher-Nijenhuis calculus.
\begin{theorem}[{\cite{fn}}]
	\label{FN}
Let $D\colon \Omega^*(M)\to \Omega^*(M)$ be a derivation of degree
$p$. Then there
are unique $\phi\in\Omega^p(M,TM)$ and $\psi\in\Omega^{p+1}(M,TM)$, such that $D = \lie_\phi + i_\psi$.
\end{theorem}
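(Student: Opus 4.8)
The plan is to split an arbitrary derivation $D$ of degree $p$ into a \emph{Lie piece} $\lie_\phi$, reproducing the action of $D$ on $\Omega^0(M)=C^\infty(M)$, and an \emph{algebraic piece} that vanishes on functions and is realized by an insertion operator $i_\psi$. Two preliminary facts underlie the argument. First, every derivation is a \emph{local} operator: if $\omega$ vanishes on an open set $U$ and $x\in U$, pick $f\in C^\infty(M)$ supported in $U$ with $f\equiv 1$ near $x$; then $f\omega=0$, and evaluating $0=D(f\omega)=D(f)\wedge\omega+fD(\omega)$ at $x$ gives $(D\omega)(x)=0$. Hence it suffices to work in a chart, where every form is a finite $C^\infty(M)$-combination of products $df_1\wedge\dots\wedge df_k$. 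Second, for every $q$ there is a natural isomorphism $\Omega^{q}(M,TM)\xrightarrow{\sim}\operatorname{Hom}_{C^\infty(M)}\!\big(\Omega^1(M),\Omega^q(M)\big)$ sending $\phi$ to the map $\alpha\mapsto i_\phi\alpha$; unwinding the shuffle formula on a $1$-form shows $(i_\phi\alpha)(Y_1,\dots,Y_q)=\alpha(\phi(Y_1,\dots,Y_q))$, a tensorial contraction that determines and is determined by $\phi$.

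Next I would characterize the algebraic derivations. If $D$ has degree $p$ and $D|_{C^\infty(M)}=0$, then the Leibniz rule with $f\in\Omega^0(M)$ reads $D(f\omega)=fD(\omega)$, so $D$ is $C^\infty(M)$-linear; in particular $D|_{\Omega^1(M)}$ lies in $\operatorname{Hom}_{C^\infty(M)}(\Omega^1(M),\Omega^{p+1}(M))$, which by the second fact (with $q=p+1$) equals $i_\psi|_{\Omega^1(M)}$ for a unique $\psi\in\Omega^{p+1}(M,TM)$. Both $D$ and $i_\psi$ are degree-$p$ derivations vanishing on $\Omega^0(M)$ and agreeing on $\Omega^1(M)$; since in a chart the algebra is generated by $\Omega^0(M)$ and the differentials in $\Omega^1(M)$, the Leibniz rule forces $D=i_\psi$ everywhere.

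To extract the Lie piece from a general $D$, I would examine $D|_{C^\infty(M)}\colon C^\infty(M)\to\Omega^p(M)$, a derivation into the module $\Omega^p(M)$. Being local, it satisfies $D(f)(x)=0$ whenever $f(x)=0$ and $df(x)=0$ (expand $f$ by Hadamard's lemma as a sum of products of functions vanishing at $x$ and apply Leibniz), so $D(f)(x)$ depends only on $df(x)$; this produces a unique $C^\infty(M)$-linear $\lambda\colon\Omega^1(M)\to\Omega^p(M)$ with $D(f)=\lambda(df)$, and by the second fact (with $q=p$) $\lambda=i_\phi|_{\Omega^1(M)}$ for a unique $\phi\in\Omega^p(M,TM)$. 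Since $i_\phi$ kills functions, $\lie_\phi f=[i_\phi,d]f=i_\phi(df)=D(f)$, so $D-\lie_\phi$ is a degree-$p$ derivation vanishing on $C^\infty(M)$; by the previous paragraph $D-\lie_\phi=i_\psi$, whence $D=\lie_\phi+i_\psi$. For uniqueness, from $\lie_\phi+i_\psi=\lie_{\phi'}+i_{\psi'}$ evaluation on functions kills the $i_\psi$-terms and gives $i_\phi(df)=i_{\phi'}(df)$ for all $f$; as exact $1$-forms locally span $\Omega^1(M)$ over $C^\infty(M)$ and both sides are $C^\infty(M)$-linear, the second fact yields $\phi=\phi'$, hence $i_\psi=i_{\psi'}$ and then $\psi=\psi'$.

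The main obstacle is the second preliminary fact together with the generation step: one must verify that a $C^\infty(M)$-linear map out of $1$-forms is precisely an insertion operator, and that two derivations coinciding on the local generators $f$ and $df$ must coincide. Locality is exactly what licenses the reduction to a chart in the latter step and the jet argument in the former, so establishing it cleanly is the crux; everything else is bookkeeping with the graded Leibniz rule and the shuffle formula.
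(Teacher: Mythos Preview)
Your argument is the standard proof of the Fr\"olicher--Nijenhuis decomposition and is correct. Note, however, that the paper does not supply its own proof of this statement: Theorem~\ref{FN} is quoted from~\cite{fn} as background in the preliminaries, and only its consequences (injectivity of $\phi\mapsto i_\phi$ and the characterization of derivations commuting with $d$) are used later. So there is no ``paper's proof'' to compare against; what you have written is essentially the original Fr\"olicher--Nijenhuis argument, with locality, the identification $\Omega^q(M,TM)\cong\operatorname{Hom}_{C^\infty(M)}(\Omega^1(M),\Omega^q(M))$, and the splitting into an algebraic part (vanishing on functions, hence an $i_\psi$) and a Lie part (determined by the action on functions via $\lie_\phi f=i_\phi\,df$). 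One small point you leave implicit: when passing from ``$D(f)(x)=0$ if $f(x)=0$ and $df(x)=0$'' to ``$D(f)(x)$ depends only on $df(x)$'', you silently use that a derivation annihilates constants, so that one may replace $f$ by $f-f(x)$; this is immediate from $D(1)=D(1\cdot 1)=2D(1)$, but it is worth stating.
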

As a consequence of the above theorem, we get:
\begin{enumerate}[(i)]
	\item If a $TM$-valued $p$-form $\phi$ is different from $0$, then
		$i_\phi\not=0$.
	\item If $D \colon \Omega^*(M)\to \Omega^*(M)$ is a
		derivation such that $\left[ D, d \right] =0$, then
		there is  a unique $\phi\in\Omega^p(M,TM)$, such that
		$D = \lie_\phi$.
\end{enumerate}

For a $k$-form $\omega\in \Omega^k (M)$ and $TM$-valued $p$-form $\phi$ , we
define the $TM$-valued
$(p+k)$-form $\omega \wdd \phi$ by
\begin{equation*}
	\left( \omega\wdd \phi \right)\left( Y_1,\dots, Y_{p+k} \right) =
	\sum_{\sigma\in\shuffle_{k,p}} (-1)^\sigma \omega\left( Y_{\sigma
	(1)},\dots,
	Y_{\sigma (k)} \right) \phi( Y_{\sigma (k+1)}, \dots, Y_{\sigma (k+p)}
).
\end{equation*}
Following~\cite{fnstrange}, we will define the \emph{contraction} (sometimes called \emph{trace}) operator
$\tr\colon \Omega^p(M,TM)\to \Omega^{p-1}(M)$ as follows. Every $\phi\in
\Omega^p(M,TM)$ can be written locally as a finite sum $\sum_{i\in I} \omega_i
\wdd X_i$, where $X_i$ are vector fields and $\omega_i\in \Omega^p(M)$. Then
\begin{equation*}
	\tr(\phi):= \sum_{i\in I} i_{X_i} \omega_i.
\end{equation*}
One can check that $\tr(\phi)$ does not depend on the choice of the
local presentation for $\phi$.
We will use the following property~\cite[eq. (2.12)]{fnstrange}
\begin{align}
	\label{contraction}
	\tr(\omega\wedge \phi) = (-1)^k \omega\wdd \tr(\phi) + (-1)^{(k+1)p}
	i_\phi \omega,
\end{align}
for any $\omega\in \Omega^k(M)$ and $\phi\in \Omega^p(M,TM)$.
	Given $\omega\in \Omega^k(M)$, we define
\begin{align*}
	\eps_\omega\colon \Omega^p(M,TM) & \to \Omega^{p+k}(M,TM)\\
\phi & \mapsto \omega \wdd \phi.
\end{align*}
For an operator $A\colon \Omega^*(M)\to \Omega^*(M)$ and $\omega \in
\Omega^*(M)$ we abbreviate the
composition $\eps_\omega \circ A$ by $\omega\wedge A$.
It is easy to check that
\begin{equation}
	\label{omegaiphi}
	\omega \wedge i_{\phi} = i_{\omega\wdd\phi}.
\end{equation}

We will need the following fact.
\begin{proposition}\label{prop:lie-wedge}
Let $M$ be a smooth manifold, $\omega\in \Omega^k(M)$, and $\phi\in\Omega^p(M,TM)$.
Then,
\begin{align}\label{lie-wedge}
\omega\wedge \lie_\phi =  \lie_{\omega\wdd \phi} -(-1)^{p+k} i_{(d\omega)\wdd
\phi}.
\end{align}
\end{proposition}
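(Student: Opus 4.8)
The plan is to prove \eqref{lie-wedge} by a direct operator computation: unwind the definition of the Lie derivative $\lie_\phi = [i_\phi,d]$ into a sum of two terms, then commute the left multiplication $\eps_\omega$ past the elementary operators $i_\phi$ and $d$ and recognize the result. No appeal to the uniqueness part of Theorem~\ref{FN} is needed, since $\eps_\omega\circ\lie_\phi$ is not a derivation; the identity is purely formal once the signs are controlled.

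First I would rewrite the graded commutator explicitly. As $i_\phi$ has degree $p-1$ and $d$ has degree $1$, we have $\lie_\phi = i_\phi d + (-1)^p d\,i_\phi$; applying the same identity to the $TM$-valued $(p+k)$-form $\omega\wdd\phi$ gives $\lie_{\omega\wdd\phi} = i_{\omega\wdd\phi} d + (-1)^{p+k} d\,i_{\omega\wdd\phi}$. These are the two endpoints of the computation.

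Next I would compose $\eps_\omega$ with the first expression: $\eps_\omega\lie_\phi = \eps_\omega i_\phi d + (-1)^p\,\eps_\omega d\,i_\phi$. On the first summand, \eqref{omegaiphi} gives $\eps_\omega i_\phi = i_{\omega\wdd\phi}$, so it is $i_{\omega\wdd\phi} d$. On the second summand, the graded Leibniz rule for $d$, in the form $d\circ\eps_\omega = \eps_{d\omega} + (-1)^k\,\eps_\omega\circ d$, lets me replace $\eps_\omega d$ by $(-1)^k d\,\eps_\omega - (-1)^k\eps_{d\omega}$; then \eqref{omegaiphi} applied to $\eps_\omega i_\phi$ and to $\eps_{d\omega} i_\phi$ turns $(-1)^p\,\eps_\omega d\,i_\phi$ into $(-1)^{p+k} d\,i_{\omega\wdd\phi} - (-1)^{p+k} i_{(d\omega)\wdd\phi}$. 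Adding the two summands and recognizing $i_{\omega\wdd\phi} d + (-1)^{p+k} d\,i_{\omega\wdd\phi}$ as $\lie_{\omega\wdd\phi}$ gives $\eps_\omega\lie_\phi = \lie_{\omega\wdd\phi} - (-1)^{p+k} i_{(d\omega)\wdd\phi}$, which is \eqref{lie-wedge}.

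Conceptually there is no obstacle; the only place to be careful is the sign bookkeeping. The delicate points are the sign $(-1)^p$ in $\lie_\phi = i_\phi d + (-1)^p d\,i_\phi$, which comes from the odd degree $p-1$ of $i_\phi$, and the way the factor $(-1)^k$ from the Leibniz rule combines with the factor $(-1)^p$ from the commutator to produce the single factor $(-1)^{p+k}$ in front of both $d\,i_{\omega\wdd\phi}$ and $i_{(d\omega)\wdd\phi}$. One should also keep in mind that $\omega\wdd\phi\in\Omega^{p+k}(M,TM)$ while $(d\omega)\wdd\phi\in\Omega^{p+k+1}(M,TM)$, so that the Lie-derivative identity and \eqref{omegaiphi} are applied with the correct parities throughout.
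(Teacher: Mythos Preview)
Your argument is correct and is essentially the same as the paper's: both proofs use only the definition $\lie_\psi=[i_\psi,d]$, the identity \eqref{omegaiphi}, and the Leibniz rule for $d$, with identical sign bookkeeping. The only cosmetic difference is that the paper starts from $\lie_{\omega\wdd\phi}=[\,\omega\wedge i_\phi,\,d\,]$ and expands the commutator to obtain $(-1)^{k+p}(d\omega)\wedge i_\phi+\omega\wedge\lie_\phi$, whereas you start from $\omega\wedge\lie_\phi$ and arrive at $\lie_{\omega\wdd\phi}$; the computations are the same read in opposite directions.
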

\begin{proof}
The computation
\begin{align*}
	\lie_{\omega\wdd \phi}& = \left[  i_{\omega\wdd \phi}, d \right] = \left[
	\omega\wedge i_\phi, d \right] = (-1)^{k+p}(d\omega)\wedge i_{\phi} + \omega\wedge
	\lie_\phi.
\end{align*}
proves the claim.
\end{proof}

\section{Generalized Goldberg Formula} \label{sectionHC}
In this section we prove the main result of the article.
Let  $M$ be a smooth manifold equipped with a Riemannian metric $g$ and let $\cov$ denote the corresponding
Levi-Civita connection.
Using $\cov$, we can define the map
\begin{equation*}
d^\cov\colon \Omega^p(M,TM)\to
\Omega^{p+1}(M,TM)
\end{equation*}
 similarly to the standard exterior derivative, as follows
\begin{align*}
	d^\cov \phi\left( Y_1,\dots, Y_{p+1} \right) =&
	\sum_{s=1}^{p+1}(-1)^{s-1}
	\cov_{Y_s}\left( \phi( Y_1,\dots, \widehat{Y}_s,\dots, Y_{p+1}
	)
	\right)\\& + \sum_{s<t}(-1)^{s+t}\phi\left( \left[ Y_s,Y_t
	\right],Y_1,\dots,
	\widehat{Y}_s,\dots, \widehat{Y}_t,\dots, Y_{p+1} \right).
\end{align*}
Since for the Levi-Civita connection we have
$
	[Y,Z] = \cov_Y Z - \cov_Z Y,
$
one can easily check that
\begin{equation}
	\label{dcov}
	(d^\cov \phi)(Y_1, \dots, Y_{p+1}) =  \sum_{s=1}^{p+1} (-1)^{s+1}
	(\cov_{Y_s} \phi) (Y_1,\dots, \widehat{Y}_s, \dots, Y_{p+1}).
\end{equation}
Moreover, note that $d^\cov$ is related to the Riemann curvature by the formula
\begin{equation*}
	(d^\cov)^2 \phi(Y_1, \dots, Y_{p+2}) =  \sum_{\sigma\in \shuffle_{2,p}}   (-1)^\sigma
	R(Y_{\sigma(1)},Y_{\sigma(2)}) \left(\phi (Y_{\sigma(3)},\dots, Y_{\sigma(p+2)})\right).
\end{equation*}
For $\omega\in\Omega^k(M)$ and $\phi\in \Omega^p(M,TM)$, we have
$$
d^\cov \left( \omega\wdd \phi \right) = (d\omega)\wdd \phi + (-1)^k \omega \wdd
\left( d^\cov \phi \right).
$$
Note that
for any vector field $X\in \Omega^0(M,TM)$, we get
$$
d^\cov X \left( Y \right) = \cov_Y X.
$$
Hence, $d^\cov X = \cov X$. Thus we can think about $\cov$-parallel vector fields
as a generalization of harmonic functions.
For any $k$-form $\omega$
and  any vector field $X$, we get
$$
\lie_X \omega = \cov_X \omega + i_{\cov X }\omega .
$$
In other words
\begin{equation}
	\label{covlie}
\cov_X = \lie_X - i_{d^\cov X}.
\end{equation}
This equation  suggests the following generalization of the covariant derivative. Namely, for $\phi\in \Omega^p(M,TM)$ we define
\begin{equation}
	\label{covphi}
	\cov_{\phi} := \lie_\phi -(-1)^p i_{d^\cov\phi}.
\end{equation}
	We get
\begin{align*}
	\omega\wedge \cov_\phi &= \omega\wedge \lie_\phi - \omega \wedge
	i_{d^\cov \phi} = \lie_{\omega \wdd \phi} -(-1)^{p+k}i_{(d\omega)\wdd
	\phi} - (-1)^p  i_{\omega \wdd d^\cov \phi}
	\\[2ex] &= \lie_{\omega\wdd \phi} -(-1)^{p+k} i_{d\omega \wdd \phi + (-1)^k
	\omega \wdd d^\cov \phi}= \lie_{\omega\wdd \phi} -(-1)^{p+k} i_{d^\cov(\omega \wdd \phi)}
\end{align*}
that is,
\begin{equation}
	\label{omegacovphi}
\omega\wedge \cov_\phi = \cov_{\omega \wdd\phi}.
\end{equation}
This equation is a generalization of the property
\[
f\nabla_X=\nabla_{fX}
\]
for the usual covariant derivative, where $f\in C^\infty(M)$ and $X\in \Omega^0(M,TM)$.

The Hodge codifferential is abstractly defined as the Hodge dual of the
operator $d$ on $\Omega$. It is well known that given a local orthonormal frame
$X_1$, \dots, $X_n$ on $U\subset M$, the following local expression for the codifferential holds
$$
\delta=-\sum_{t=1}^n i_{X_t}\circ \cov_{X_t}.
$$
Since both $i_{X_t}$ and $\cov_{X_t}$ are derivations of
$\Omega^*(U)$, we see that $\delta$ is a differential operator of order
$2$ on $\Omega^*(U)$, and thus also on $\Omega^*(M)$.

Let $\omega\in \Omega^p(M)$.  Then
$\left[ \delta, \eps_\omega \right]$ is a differential operator of order
$1$ and of degree $p-1$ on $\Omega^*(M)$. Thus it can be expressed in a unique
way as a sum
$$
\eps_\alpha  + \cov_\phi + i_\psi
$$
for suitable $(p-1)$-form $\alpha$, $TM$-valued $(p-1)$-form $\phi$, and
$TM$-valued $(p+1)$-form $\psi$.
Our aim is to identify $\alpha$, $\phi$ and $\psi$ for a given $\omega$.

For $\omega\in\Omega^p(M)$, we define $\omega^\#\in \Omega^{p-1}(M,TM)$ and
$\omega^\cov\in\Omega^p(M,TM)$ by
\begin{align}
	\label{omegasharp}
	\omega^\# &= \sum_{t=1}^n (i_{X_t}\omega)\wdd X_t & \omega^\cov & =
	\sum_{t=1}^n (\cov_{X_t}\omega)\wdd X_t.
\end{align}
It is easy to see that $\omega^\#$ and $\omega^\cov$ do not depend on the choice
of the orthonormal frame $X_1$, \dots, $X_n$.
Therefore $\omega^\#$ and  $\omega^\cov$ are well-defined.
By applying the contraction operator $\tr$ to \eqref{omegasharp}, we get
\begin{align}
	\label{trsharp}
	\tr(\omega^\#)& = \sum_{t=1}^n i_{X_t}^2 \omega =0\\
	\label{deltacov}
	\tr(\omega^\cov) & = \sum_{t=1}^n i_{X_t} \cov_{X_t} \omega =
	-\delta\omega.
\end{align}

\begin{proposition}
	\label{prop:omegacov}
	For any $\omega\in \Omega^p\left( M \right)$ , we have $d^\cov\left(
	\omega^\# \right) + \left( d\omega \right)^\# = \omega^\cov$.
\end{proposition}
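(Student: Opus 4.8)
The plan is to reduce to a pointwise computation and to exploit the freedom in choosing the orthonormal frame. Fix an arbitrary point $x\in M$ and pick a local orthonormal frame $X_1,\dots,X_n$ that is parallel at $x$, i.e.\ $(\cov X_t)(x)=0$ for every $t$ (such a frame is obtained by parallel transport of an orthonormal basis of $T_xM$ along the radial geodesics emanating from $x$). Since $\omega^\#$, $\omega^\cov$, and $(d\omega)^\#$ are, as already noted right after \eqref{omegasharp}, independent of the choice of orthonormal frame, it suffices to verify the asserted identity at $x$ using this particular frame.

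Next I would expand $d^\cov(\omega^\#)=d^\cov\bigl(\sum_{t}(i_{X_t}\omega)\wdd X_t\bigr)$ term by term using the graded Leibniz rule $d^\cov(\alpha\wdd\psi)=(d\alpha)\wdd\psi+(-1)^{|\alpha|}\alpha\wdd d^\cov\psi$ recorded in Section~\ref{sectionHC}, applied with $\alpha=i_{X_t}\omega\in\Omega^{p-1}(M)$ and $\psi=X_t$, together with the identity $d^\cov X_t=\cov X_t$. Each summand then reads $\bigl(d\,i_{X_t}\omega\bigr)\wdd X_t+(-1)^{p-1}(i_{X_t}\omega)\wdd(\cov X_t)$, and at the point $x$ the second term vanishes because $(\cov X_t)(x)=0$. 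Hence $d^\cov(\omega^\#)(x)=\sum_{t}\bigl(d\,i_{X_t}\omega\bigr)(x)\wdd X_t(x)$.

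Finally I substitute the Cartan identity $d\,i_{X_t}\omega=\lie_{X_t}\omega-i_{X_t}d\omega$ and compare it with \eqref{covlie} in the form $\cov_{X_t}\omega=\lie_{X_t}\omega-i_{d^\cov X_t}\omega$, which at $x$ gives $(\cov_{X_t}\omega)(x)=(\lie_{X_t}\omega)(x)$ since $d^\cov X_t(x)=\cov X_t(x)=0$. Combining these, $d^\cov(\omega^\#)(x)=\sum_{t}(\cov_{X_t}\omega)(x)\wdd X_t(x)-\sum_{t}(i_{X_t}d\omega)(x)\wdd X_t(x)=\omega^\cov(x)-(d\omega)^\#(x)$. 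As $x\in M$ was arbitrary, the identity $d^\cov(\omega^\#)+(d\omega)^\#=\omega^\cov$ holds on all of $M$.

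The only point demanding care is the legitimacy of passing to a frame that is parallel at the chosen point, which rests precisely on the frame-independence of the three objects involved (already established in the text); the remainder is routine bookkeeping of signs in the Leibniz rule and an application of Cartan's formula. A frame-free alternative would be to unwind both sides on arbitrary vector fields $Y_1,\dots,Y_p$ via \eqref{dcov} and the definition of $\wdd$, but this route is computationally heavier and obscures the mechanism, so I would not pursue it.
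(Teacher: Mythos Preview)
Your proof is correct and follows essentially the same route as the paper: expand $d^\cov(\omega^\#)$ via the Leibniz rule, combine with Cartan's formula and \eqref{covlie}, and kill the remaining terms by working at an arbitrary point with a frame satisfying $(\cov X_t)_x=0$. The only difference is organizational: the paper first computes with a general frame, isolates the leftover expression $T=\sum_t i_{\cov X_t}\omega\wdd X_t+(-1)^{p-1}\sum_t i_{X_t}\omega\wdd\cov X_t$, observes it is tensorial, and then annihilates it pointwise, whereas you pass to the special frame from the outset.
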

\begin{proof}
	Let $X_1$, \dots, $X_n$ be an orthonormal frame on an open set
	$U$ in $M$.
By definition of $\omega^\cov$ and the Leibniz rule for $d^\cov$, we get
\begin{equation}
	\label{dcovomegasharp}
	d^\cov\left( \omega^\# \right) = \sum_{t=1}^n d\left( i_{X_t} \omega
	 \right)\wdd X_t + \left( -1 \right)^{p-1} \sum_{t=1}^n i_{X_t} \omega \wdd
	\cov X_t.
\end{equation}
Further,
\begin{equation}
	\label{domegasharp}
	\left( d\omega \right)^\# = \sum_{t=1}^n i_{X_t}\left( d\omega \right)
	\wdd X_t.
\end{equation}
Note that for every $1\le t\le n$, we have
\begin{equation*}
	d\left( i_{X_t} \omega\right) + i_{X_t} \left( d\omega \right) =
	\lie_{X_t}\omega = \cov_{X_t} \omega + i_{\cov X_t} \omega.
\end{equation*}
Therefore, summing \eqref{dcovomegasharp} with \eqref{domegasharp} we get
\begin{align*}
	d^\cov\left( \omega^\# \right) + \left( d\omega \right)^\# &=
	\sum_{t=1}^n \cov_{X_t} \omega \wdd X_t + \sum_{t=1}^n i_{\cov X_t}
	\omega \wdd X_t + \left( -1 \right)^{p-1} \sum_{t=1}^n i_{X_t} \omega \wdd
	\cov X_t \\& = \omega^\cov+ \sum_{t=1}^n i_{\cov X_t}
	\omega \wdd X_t + \left( -1 \right)^{p-1} \sum_{t=1}^n i_{X_t} \omega \wdd
	\cov X_t.
\end{align*}
Let us denote the expression
\begin{equation*}
 \sum_{t=1}^n i_{\cov X_t}
	\omega \wdd X_t + \left( -1 \right)^{p-1} \sum_{t=1}^n i_{X_t} \omega \wdd
	\cov X_t
\end{equation*}
by $T$. Since $T = d^\cov\left( \omega^\# \right) + \left( d\omega \right)^\# -
\omega^\cov$, we see that $T$ does not depend on the choice of the orthonormal
basis $X_1$, \dots, $X_n$ and that $T$ is a tensor on $M$. Let $x\in M$. Then there is an
local orthonormal frame $X_1$, \dots, $X_n$ on an open neighbourhood of $x$ such that $\left( \cov X_t
\right)_x = 0$
for every $1\leq t\leq n$. Computing $T_x$ with respect to this basis, we see that
$T_x = 0$. Since $x$ is an arbitrary point of  $M$, we see that~$T \equiv 0$.
\end{proof}
Let us define for every $\omega\in \Omega^p(M)$ the $TM$-valued form
\begin{equation}
	\label{diamond0}
\omega^\dm=d^\cov\left( \omega^\# \right) + \omega^\cov.
\end{equation}
Note that by Proposition~\ref{prop:omegacov} we can write it in two
other ways
\begin{align}
	\label{diamond1}
	\omega^\dm &= 2d^\cov\left( \omega^\#
	\right) + \left( d\omega \right)^\#,\\
	\label{diamond2}
	\omega^\dm &= 2\omega^\cov  - \left( d\omega
	\right)^\#.
\end{align}
Now \eqref{trsharp} and \eqref{deltacov}  give the following
expression for $\delta \omega$ in terms of~$\omega^\dm$
\begin{align}
	\label{deltaomega}
	\delta\omega= -\frac12 \tr(\omega^\dm).
\end{align}
We can now prove the announced  formula for the commutator of the codifferential with the left wedge multiplication by a $k$-form.
\begin{theorem}
	\label{main}
	Let $\omega\in \Omega^p(M)$. Then
	\begin{equation}\label{liephithm}	
		\left[ \delta, \eps_\omega \right] = \eps_{\delta \omega} -
		\cov_{\omega^\#} - (-1)^p i_{\omega^\cov}.
		\end{equation}
	Or, using the Lie derivative instead of the covariant derivative,
	\begin{equation}	
		\label{ourgoldberg}
		\left[ \delta, \eps_\omega \right] = \eps_{ \delta \omega}
		 - \lie_{\omega^\#} - \left( -1 \right)^p i_{\omega^\dm}.
		\end{equation}
\end{theorem}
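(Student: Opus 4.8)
The plan is to compute $[\delta,\eps_\omega]$ directly from the local expression $\delta = -\sum_{t=1}^n i_{X_t}\circ\cov_{X_t}$ associated with a local orthonormal frame $X_1,\dots,X_n$, and then to recognize the resulting terms as $\eps_{\delta\omega}$, $\cov_{\omega^\#}$ and $i_{\omega^\cov}$ by means of the tensorial quantities already introduced. Since all four operators appearing in \eqref{liephithm} are globally defined, it is enough to verify the identity on an arbitrary coordinate patch, where the orthonormal frame can be chosen at will.

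The first step is to record two elementary commutation rules. For a fixed vector field $X$ the interior product $i_X$ is a derivation of degree $-1$ and the covariant derivative $\cov_X$ is a derivation of degree $0$; applying the graded Leibniz rule against the left multiplication $\eps_\alpha$ by a form $\alpha$ gives
\begin{equation*}
	[\cov_X,\eps_\alpha] = \eps_{\cov_X\alpha}, \qquad i_X\circ\eps_\alpha = \eps_{i_X\alpha} + (-1)^{|\alpha|}\,\eps_\alpha\circ i_X .
\end{equation*}
Combining these with the graded product rule $[AB,C] = A[B,C] + (-1)^{|B|\,|C|}[A,C]B$ applied to $A = i_{X_t}$, $B = \cov_{X_t}$, $C = \eps_\omega$, one obtains $[i_{X_t}\cov_{X_t},\eps_\omega] = i_{X_t}\,\eps_{\cov_{X_t}\omega} + \eps_{i_{X_t}\omega}\,\cov_{X_t}$, and hence
\begin{equation*}
	[\delta,\eps_\omega] = -\sum_{t=1}^n \Bigl( i_{X_t}\,\eps_{\cov_{X_t}\omega} + \eps_{i_{X_t}\omega}\,\cov_{X_t} \Bigr).
\end{equation*}

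It then remains to simplify the two sums. For the first one, moving $i_{X_t}$ to the right past $\eps_{\cov_{X_t}\omega}$ (a degree-$p$ left multiplication) produces $\sum_t \eps_{i_{X_t}\cov_{X_t}\omega} + (-1)^p \sum_t \eps_{\cov_{X_t}\omega}\circ i_{X_t}$; by \eqref{deltacov} the first piece equals $-\eps_{\delta\omega}$, while the $0$-form-valued special case $i_{\alpha\wdd X} = \eps_\alpha\circ i_X$ of \eqref{omegaiphi}, together with the definition of $\omega^\cov$, identifies the second piece as $(-1)^p i_{\omega^\cov}$. For the second sum, the special case $\cov_{\alpha\wdd X} = \eps_\alpha\circ\cov_X$ of \eqref{omegacovphi} and the $\R$-linearity of $\phi\mapsto\cov_\phi$ give $\sum_t \eps_{i_{X_t}\omega}\,\cov_{X_t} = \cov_{\omega^\#}$. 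Collecting everything yields \eqref{liephithm}. To pass to \eqref{ourgoldberg} one only substitutes $\cov_{\omega^\#} = \lie_{\omega^\#} - (-1)^{p-1} i_{d^\cov(\omega^\#)}$ from \eqref{covphi} and uses $\omega^\dm = d^\cov(\omega^\#) + \omega^\cov$ from \eqref{diamond0}.

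I expect the whole argument to be routine, the only real care being the sign bookkeeping with the graded commutators, especially the interaction of the degree-$(-1)$ operators $i_{X_t}$ with the degree-$p$ operator $\eps_\omega$. No structural input is needed beyond the two Leibniz rules above and the already established identities \eqref{deltacov}, \eqref{omegaiphi}, \eqref{omegacovphi}, \eqref{covphi} and \eqref{diamond0}.
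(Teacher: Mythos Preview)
Your proposal is correct and follows essentially the same route as the paper: both expand $[\,i_{X_t}\circ\cov_{X_t},\,\eps_\omega\,]$ via the graded Leibniz rule into $\eps_{i_{X_t}\omega}\cov_{X_t}+i_{X_t}\eps_{\cov_{X_t}\omega}$, commute $i_{X_t}$ past $\eps_{\cov_{X_t}\omega}$, and then identify the three resulting sums with $\eps_{\delta\omega}$, $\cov_{\omega^\#}$ and $i_{\omega^\cov}$ using \eqref{deltacov}, \eqref{omegacovphi} and \eqref{omegaiphi}, before passing to \eqref{ourgoldberg} exactly as you indicate. The only difference is presentational, not mathematical.
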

\begin{proof}
Let $X$ be a vector field and $\omega\in \Omega^p\left( M \right)$. Then
\begin{align*}
	\left[ i_X\circ \cov_X, \eps_\omega \right] &= \left[ i_X,
	\eps_\omega
	\right]\circ \cov_X + i_{X}\circ \left[ \cov_X, \eps_\omega \right]
	\\&= \eps_{i_X \omega} \cov_X + i_X  \eps_{\cov_X \omega}
	\\&= \eps_{i_X \omega} \cov_X + [i_X,  \eps_{\cov_X \omega}] + (-1)^p
	\eps_{\cov_X \omega} i_X
	\\& = \cov_{i_X\omega \wdd X} + \eps_{i_X\cov_X \omega} + (-1)^p
	\eps_{\cov_X \omega} i_X
	\\&= \eps_{i_X\cov_X \omega} + \cov_{i_X \omega \wdd X} + (-1)^p
	i_{\cov_X \omega \wdd X}.
\end{align*}
Now  \eqref{liephithm} follows by substituting $X_t$ instead of $X$ and summing up over
$t$.

Since $\omega^\#\in \Omega^{p-1}\left( M,TM \right)$, from \eqref{covphi}  we
get
\begin{equation*}
	\cov_{\omega^\#} = \lie_{\omega^\#} - \left( -1 \right)^{p-1}
	i_{d^\cov\left( \omega^\# \right)} = \lie_{\omega^\#} + \left( -1 \right)^{p}
	i_{d^\cov\left( \omega^\# \right)}.
\end{equation*}
Therefore
\begin{equation*}
	\left[ \delta, \epsilon_\omega \right] = \epsilon_{\delta\omega} -
	\lie_{\omega^\#} - (-1)^p \left( i_{d^\cov\left( \omega^\# \right)} +
	i_{\omega^\cov} \right).	
\end{equation*}
\end{proof}
As a corollary we can get Formula~(4) in Goldberg's
article~\cite{goldberg-paper}.
\begin{corollary}
	\label{cor:goldberg}
Let $\xi$ be a vector field on a Riemannian manifold $M$, and $\eta$ its metric
dual $1$-form. Then $\eta^\dm = (\lie_\xi g)^\#$, that is
\begin{equation}
	\label{goldberg}
	\{\delta, \epsilon_\eta\} + \lie_\xi = \epsilon_{\delta \eta} +
	i_{(\lie_\xi g)^\#},
\end{equation}
where $\{-,-\}$ denotes the anti-commutator of operators and $(\lie_\xi g)^\#$
is the metric contraction of the $(0,2)$-tensor $\lie_\xi g$.
\end{corollary}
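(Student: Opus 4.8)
The plan is to specialise Theorem~\ref{main} to $p=1$ and $\omega=\eta$, and then to identify $\eta^\dm$ explicitly. Since $\delta$ has degree $-1$ and $\eps_\eta$ has degree $1$, the graded commutator $[\delta,\eps_\eta]$ coincides with the anticommutator $\{\delta,\eps_\eta\}$, while $(-1)^p=-1$ for $p=1$; hence Formula~\eqref{ourgoldberg} reads
\[
\{\delta,\eps_\eta\}=\eps_{\delta\eta}-\lie_{\eta^\#}+i_{\eta^\dm}.
\]
Moreover, by~\eqref{omegasharp} we have $\eta^\#=\sum_t(i_{X_t}\eta)\wdd X_t=\sum_t g(\xi,X_t)\,X_t=\xi$, so $\lie_{\eta^\#}=\lie_\xi$ and the identity above becomes $\{\delta,\eps_\eta\}+\lie_\xi=\eps_{\delta\eta}+i_{\eta^\dm}$. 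Thus the whole statement reduces to the tensorial identity $\eta^\dm=(\lie_\xi g)^\#$.

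To establish this identity I would pair both sides with the metric. By~\eqref{diamond0}, $\eta^\dm=d^\cov(\eta^\#)+\eta^\cov=\cov\xi+\eta^\cov$, where $d^\cov\xi$ is the $TM$-valued $1$-form $Y\mapsto\cov_Y\xi$. Since $\eta=g(\xi,-)$ and $\cov g=0$, one has $(\cov_{X_t}\eta)(Y)=g(\cov_{X_t}\xi,Y)$, so that for all vector fields $Y,W$
\[
g\bigl(\eta^\cov(Y),W\bigr)=\sum_t g(\cov_{X_t}\xi,Y)\,g(X_t,W)=g(\cov_W\xi,Y),
\]
while $g\bigl(d^\cov(\eta^\#)(Y),W\bigr)=g(\cov_Y\xi,W)$. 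Adding these gives $g(\eta^\dm(Y),W)=g(\cov_Y\xi,W)+g(\cov_W\xi,Y)$. On the other hand, expanding $(\lie_\xi g)(Y,W)$ from the definition of the Lie derivative and using $[\xi,Y]=\cov_\xi Y-\cov_Y\xi$ together with metric compatibility yields the classical identity $(\lie_\xi g)(Y,W)=g(\cov_Y\xi,W)+g(\cov_W\xi,Y)$. Since $(\lie_\xi g)^\#$ is, by convention, the $TM$-valued $1$-form characterised by $g\bigl((\lie_\xi g)^\#(Y),W\bigr)=(\lie_\xi g)(Y,W)$, we conclude $\eta^\dm=(\lie_\xi g)^\#$, which finishes the proof.

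I do not anticipate a genuine obstacle here, as the argument is entirely formal once Theorem~\ref{main} is available. The only points requiring care are the sign bookkeeping in passing from the graded commutator to the anticommutator, and matching the paper's convention for the musical isomorphism $\#$ when it is applied to the symmetric $(0,2)$-tensor $\lie_\xi g$ rather than to a differential form.
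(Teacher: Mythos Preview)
Your proof is correct and follows essentially the same route as the paper: both reduce, via Theorem~\ref{main} with $p=1$, to the identity $\eta^\dm=(\lie_\xi g)^\#$, and both verify this by computing $d^\cov\xi$ and $\eta^\cov$ separately and invoking the standard formula $(\lie_\xi g)(Y,W)=g(\cov_Y\xi,W)+g(\cov_W\xi,Y)$. The only cosmetic difference is that the paper expands both sides in an orthonormal frame while you pair with $g(-,W)$; these are equivalent presentations of the same computation.
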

\begin{proof}
	We have to check that $d^\cov \eta^\# + \eta^\cov = (\lie_\xi
	g)^\#$.
	Since $\eta^\# = \xi$, we have for any vector field $Y$
	\begin{equation}
		\label{dcovxi}
		(d^\cov \eta^\#)(Y) = (d^\cov \xi)(Y) = \cov_Y \xi = \sum_{t=1}^n
		g(X_t, \cov_Y \xi)X_t,
	\end{equation}
where $X_1$, \dots, $X_n$ is a local orthonormal frame on $M$.
Further,
\begin{align}
	\label{etacov}
	\eta^\cov (Y) = \sum_{t=1}^n (\cov_{X_t}\eta)(Y)X_t = \sum_{t=1}^n
	g(\cov_{X_t} \xi, Y)X_t.
\end{align}
It is well known  that
\begin{equation}
	\label{liexi}
(\lie_\xi g)(Y,Z) = g(\cov_Y \xi, Z) + g(\xi, \cov_Z \xi),
\end{equation}
for any vector fields $\xi$, $Y$ and $Z$. Therefore, adding~\eqref{dcovxi}
and~\eqref{etacov}, we get
\begin{equation*}
	(d^\cov \xi + \eta^\cov)(Y) = \sum_{t=1}^n (\lie_\xi g)(X_t,Y )X_t =
	(\lie_\xi g)^\# (Y).
\end{equation*}
\end{proof}

Let $S$ be a set of differential forms on $M$. We will denote by $S^\#$ the set
of vector valued forms $\omega^\#$, where $\omega\in S$. Further we write
$\Omega^*_{\lie_{S^\#}}(M)$ for the intersection of the kernels of operators
$\lie_{\omega^\#}$, for all $\omega\in S$.

Recall that a morphism of CDGAs is a morphism of algebras which preserves the degree and commutes with the differentials.
Let $f:\left( A,d \right) \longrightarrow \left( B ,d \right)$ be a morphism of CDGAs. For every $k\geq 0$, the map $f$ induces a
morphism between the  $k$-th cohomologies
$$
H^k\left( f \right)\colon H^k\left( A \right) \to H^k\left( B \right).
$$
If all the morphisms $H^k\left( f \right)$ are isomorphisms then $f$ is called a \emph{quasi-isomorphism} of CDGAs.

We have the following theorem that generalizes several known facts.
\begin{theorem}
	\label{quasi-iso}
	Let $(M,g)$ be a compact Riemannian manifold. Suppose $S\subset
	\Omega^*(M)$ is such that $[\delta, \epsilon_\omega] + \lie_{\omega^\#}
	=0$ for all $\omega\in S$. Then the inclusion
\begin{equation*}
	j\colon \Omega^*_{\lie_{S^\#}}(M)
	\hookrightarrow \Omega^*(M)
\end{equation*}
is a quasi-isomorphism of CDGAs.
\end{theorem}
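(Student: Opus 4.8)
The plan is to reduce everything to Hodge theory on the compact manifold $(M,g)$. Each operator $\lie_{\omega^\#}$, $\omega\in S$, is a derivation of $\Omega^*(M)$ which commutes with $d$ (being a Lie derivative), so its kernel is a sub-CDGA; hence $\Omega^*_{\lie_{S^\#}}(M)$, the intersection of these kernels, is a sub-CDGA of $\Omega^*(M)$ and $j$ is a morphism of CDGAs. It then remains to prove that $H^k(j)$ is an isomorphism for every $k$.

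The decisive point is that each $\lie_{\omega^\#}$ moreover commutes with the codifferential $\delta$. Indeed, our hypothesis says precisely that $\lie_{\omega^\#}=-[\delta,\eps_\omega]$, and for \emph{any} operator $A$ on $\Omega^*(M)$ one has $[\delta,[\delta,A]]=0$: expanding the graded commutators, all terms except $\delta^2 A$ and $A\delta^2$ cancel, and these vanish because $\delta^2=0$. Hence $[\delta,\lie_{\omega^\#}]=0$. Since also $[d,\lie_{\omega^\#}]=0$, the graded Jacobi identity applied to $\Delta=[\delta,d]$ gives $[\Delta,\lie_{\omega^\#}]=0$. Consequently $\lie_{\omega^\#}$ preserves the space $\mathcal{H}^*(M)$ of harmonic forms, preserves $\Delta\Omega^*(M)$, and therefore commutes with the harmonic projector $H$ and with the Green operator $G$.

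Next I would show $\mathcal{H}^k(M)\subset \Omega^k_{\lie_{S^\#}}(M)$. Fix a harmonic $h$ and $\omega\in S$. From $\lie_{\omega^\#}=[i_{\omega^\#},d]$ and $dh=0$ we get $\lie_{\omega^\#}h=\pm\, d\, i_{\omega^\#}h$, so $\lie_{\omega^\#}h$ is exact; from $\lie_{\omega^\#}=-[\delta,\eps_\omega]$ and $\delta h=0$ we get $\lie_{\omega^\#}h=-\delta(\omega\wedge h)$, so $\lie_{\omega^\#}h$ is coexact. Since $d\Omega^*(M)$ and $\delta\Omega^*(M)$ meet only in $0$, we conclude $\lie_{\omega^\#}h=0$, i.e.\ $h\in\Omega^*_{\lie_{S^\#}}(M)$. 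In particular, every de Rham class has a representative --- namely its harmonic one --- inside $\Omega^*_{\lie_{S^\#}}(M)$, which gives surjectivity of $H^k(j)$.

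Finally, for injectivity, let $\beta\in\Omega^k_{\lie_{S^\#}}(M)$ with $d\beta=0$ and $[\beta]=0$ in $H^k(\Omega^*(M))$. Using $\Id=H+\Delta G$, together with $H\beta=0$ (as $\beta$ is exact) and $dG\beta=G\,d\beta=0$, one obtains $\beta=d(\delta G\beta)$. Because $\lie_{\omega^\#}$ commutes up to sign with $\delta$ and exactly with $G$, the primitive satisfies $\lie_{\omega^\#}(\delta G\beta)=\pm\,\delta G\,\lie_{\omega^\#}\beta=0$, so $\delta G\beta\in\Omega^{k-1}_{\lie_{S^\#}}(M)$ and already $[\beta]=0$ in $H^k(\Omega^*_{\lie_{S^\#}}(M))$. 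Hence $H^k(j)$ is an isomorphism for all $k$ and $j$ is a quasi-isomorphism. The main obstacle, and really the only input beyond standard Hodge theory that is not purely formal, is the compatibility of $\lie_{\omega^\#}$ with $\delta$, which in turn yields that $G$ preserves $\Omega^*_{\lie_{S^\#}}(M)$; fortunately this drops out of the hypothesis via the identity $[\delta,[\delta,\eps_\omega]]=0$, and compactness of $M$ enters only to justify the Hodge decomposition and the Green operator.
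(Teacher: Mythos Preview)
Your proof is correct and follows essentially the same Hodge-theoretic strategy as the paper: derive $[\delta,\lie_{\omega^\#}]=0$ from $\delta^2=0$ and the hypothesis, deduce $[\Delta,\lie_{\omega^\#}]=0$, show harmonic forms lie in $\Omega^*_{\lie_{S^\#}}(M)$ for surjectivity, and produce a primitive via the Green operator for injectivity. Your packaging is in fact slightly tidier in two places. First, for surjectivity you show $\lie_{\omega^\#}h$ is simultaneously exact and coexact (using both $\lie_{\omega^\#}=[i_{\omega^\#},d]$ and $\lie_{\omega^\#}=-[\delta,\eps_\omega]$), while the paper shows it is exact and harmonic; both conclusions force it to vanish. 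Second, you obtain $[\lie_{\omega^\#},G]=0$ by the abstract observation that any operator commuting with $\Delta$ preserves both summands of $\Omega^*=\ker\Delta\oplus\operatorname{im}\Delta$ and hence commutes with the projector $H$ and with $G=(\Delta|_{\operatorname{im}\Delta})^{-1}\oplus 0$; the paper instead multiplies $\lie_{\omega^\#}\Delta=\Delta\lie_{\omega^\#}$ by $G$ on both sides and must then verify separately that $\Pi_\Delta\lie_{\omega^\#}=0$, re-invoking the hypothesis on the coexact part of the Hodge decomposition. These are minor streamlinings of the same argument rather than a different route.
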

\begin{proof}
	Let $\omega\in S$. Since $[\delta, \eps_\omega] + \lie_{\omega^\#}
	=0$ and $\delta^2 =0$, we get that $$[\delta,\lie_{\omega^\#} ] =-[\delta, [\delta,\eps_\omega ]]=0.$$ Since the Hodge Laplacian $\lap$ is the graded commutator of $d$ and $\delta$,
	we have also that $[\lap, \lie_{\omega^\#}] =0$.

Let $\beta$ be a harmonic $p$-form. We are going to show that $\beta\in
\Omega^p_{\lie_{S^\#}}(M)$. This will imply by Hodge theory that $j$ induces a surjection in
cohomology. Since ${[\lap, \lie_{\omega^\#}] =0}$ for all $\omega\in S$, we get
immediately, that $\lap(\lie_{\omega^\#}\beta ) =0$, i.e. $\lie_{\omega^\#}
\beta$ is harmonic.
But, since $\beta$ is closed, we have $\lie_{\omega^\#} \beta = d i_{\omega^\#}
\beta$ is an exact form. Thus by Hodge theory, $\lie_{\omega^\#} \beta =0$.

It is left to show that $j$ induces an injection in cohomology. Let $\beta\in
\Omega^p_{\lie_{S^\#}}(M)$ such that $[\beta] =0$ in $H^p(M)$. Then $\beta = d
G\delta \beta$, where $G$ is the Green operator for $\lap$.
We are going to show that $G\delta \beta \in \Omega^p_{\lie_{S^\#}}(M)$. For
this, it is enough to prove that $\lie_{\omega^\#} G = G \lie_{\omega^\#}$ for
every $\omega\in S$. In fact, then
\begin{equation*}
	\lie_{\omega^\#} G \delta \beta = G\delta \lie_{\omega^\#} \beta =0, \
	\forall \omega\in S.
\end{equation*}
We have
\begin{align}
	\label{green}
	I - G\lap &= \Pi_{\lap}, & I - \lap G = \Pi_{\lap},
\end{align}
where $\Pi_\lap$ is the orthogonal projection on the set of harmonic forms.
Now we multiply the equation $\lie_{\omega^\#} \lap = \lap \lie_{\omega^\#}$ by
$G$ on the left and right hand sides. We get
\begin{equation*}
	G \lie_{\omega^\#} \lap G = G \lap \lie_{\omega^\#} G.
\end{equation*}
Applying \eqref{green} we obtain
\begin{equation}
	\label{long}
	G \lie_{\omega^\#} - G\lie_{\omega^\#}\Pi_{\lap} = \lie_{\omega^\#} G -
	\Pi_{\lap}\lie_{\omega^\#}G.
\end{equation}
As we saw above, $\lie_{\omega^\#}$ annihilates harmonic forms, hence
$\lie_{\omega^\#} \Pi_\lap=0$.
To finish the proof it is enough to check that $\Pi_\lap \lie_{\omega^\#} =0$.
Let $\alpha\in \Omega^k(M)$. By Hodge theory, we can write $\alpha$ as
$\alpha_\delta + \alpha_\lap + \alpha_d$, where $\alpha_\delta$ is in the image
of $\delta$, $\alpha_d$ is in the image of~$d$, and $\alpha_\lap$ is harmonic.
Note that $\lie_{\omega^\#} \alpha_\lap=0$. Further, $\lie_{\omega^\#}
\alpha_d = \pm di_{\omega^\#}\alpha_d$, where the sign depends on the degree of
$\omega$. In particular, $\lie_{\omega^\#} \alpha_d$ is exact, and therefore
$\Pi_\lap \lie_{\omega^\#} \alpha_d =0$. Finally, since $[\delta, \epsilon_\omega]
+ \lie_{\omega^\#} =0$, we get
\begin{equation*}
	\lie_{\omega^\#} \alpha_\delta = - [\delta, \epsilon_\omega]
	\alpha_\delta = - \delta (\omega\wedge \alpha_\delta).
\end{equation*}
Hence, $\lie_{\omega^\#} \alpha_\delta$ is a coexact form and thus $\Pi_\lap \lie_{\omega^\#} \alpha_\delta =0$.
\end{proof}
The previous theorem shows the importance of the property $[\delta, \omega] + \lie_{\omega^\#} =0$ for a differential form $\omega$.
In the following theorem we characterize all the forms with this property.
\begin{theorem}
	\label{paralelity}
Let $(M,g)$ be a Riemannian manifold and $\omega$ a $p$-form on $M$, with $p\ge 1$. Then
\begin{equation*}
	[\delta, \epsilon_\omega] + \lie_{\omega^\#} = 0
\end{equation*}
if and only if one of the following conditions holds
\begin{enumerate}[$(i)$]
	\item  $p=1$ and $\omega^\#$ is a Killing vector field;
	\item $p\ge 2$ and $\omega$ is parallel.
\end{enumerate}
\end{theorem}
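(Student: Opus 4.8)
The plan is to first invoke Theorem~\ref{main} to convert the operator identity $[\delta,\eps_\omega]+\lie_{\omega^\#}=0$ into a purely tensorial condition on $\omega$, and then analyse that condition. By the form~\eqref{ourgoldberg} of the generalized Goldberg formula,
\[
[\delta,\eps_\omega]+\lie_{\omega^\#}=\eps_{\delta\omega}-(-1)^p i_{\omega^\dm},
\]
so the hypothesis says exactly that $\eps_{\delta\omega}=(-1)^p i_{\omega^\dm}$ as operators on $\Omega^*(M)$. Applying both sides to the constant function $1\in\Omega^0(M)$ and using that a contraction operator $i_\phi$ annihilates $\Omega^0(M)$ (immediate from the shuffle formula defining $i_\phi$), one gets $\delta\omega=0$; feeding this back, $i_{\omega^\dm}=0$, and hence $\omega^\dm=0$ by the injectivity of $\phi\mapsto i_\phi$ (first consequence of Theorem~\ref{FN}). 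Conversely, if $\omega^\dm=0$ then $\delta\omega=-\tfrac12\tr(\omega^\dm)=0$ by~\eqref{deltaomega}, so the displayed operator vanishes. Thus the theorem reduces to the Riemannian statement: \emph{$\omega^\dm=0$ if and only if either $p=1$ and $\omega^\#$ is Killing, or $p\ge 2$ and $\omega$ is parallel.}

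For $p=1$ this follows at once from Corollary~\ref{cor:goldberg}, which identifies $\omega^\dm$ with $(\lie_{\omega^\#}g)^\#$: since the metric contraction of a $(0,2)$-tensor vanishes only for the zero tensor, $\omega^\dm=0$ is equivalent to $\lie_{\omega^\#}g=0$, i.e.\ to $\omega^\#$ being Killing. The implication ``$\omega$ parallel $\Rightarrow\omega^\dm=0$'' for $p\ge 2$ (indeed for every $p$) is equally easy: if $\cov\omega=0$ then $\omega^\cov=0$ and, the Levi--Civita connection being torsion-free, $d\omega=0$, whence $\omega^\dm=2\omega^\cov-(d\omega)^\#=0$ by~\eqref{diamond2}.

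The substantive direction is ``$\omega^\dm=0\Rightarrow\omega$ parallel'' for $p\ge 2$. Again using~\eqref{diamond2}, $\omega^\dm=0$ gives $2\omega^\cov=(d\omega)^\#$, and pairing both vector-valued $p$-forms with an arbitrary vector field $Z$ through the metric rewrites this as the pointwise identity
\[
2\,(\cov_Z\omega)(Y_1,\dots,Y_p)=d\omega(Z,Y_1,\dots,Y_p)\qquad\text{for all }Z,Y_1,\dots,Y_p .
\]
In particular the covariant derivative, viewed as the $(0,p+1)$-tensor $(Z,Y_1,\dots,Y_p)\mapsto(\cov_Z\omega)(Y_1,\dots,Y_p)$, equals $\tfrac12 d\omega$, hence is \emph{totally} skew-symmetric (a Killing--Yano type condition). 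On the other hand, since $\cov$ is torsion-free, $d\omega$ is the alternation of $\cov\omega$,
\[
d\omega(Y_0,\dots,Y_p)=\sum_{i=0}^{p}(-1)^i(\cov_{Y_i}\omega)(Y_0,\dots,\widehat{Y_i},\dots,Y_p),
\]
and when $\cov\omega$ is already totally skew each of the $p+1$ summands equals $(\cov_{Y_0}\omega)(Y_1,\dots,Y_p)$, so $d\omega=(p+1)\cov\omega$. Combining the two relations yields $(p-1)\cov\omega=0$, and since $p\ge 2$ we conclude $\cov\omega=0$, i.e.\ $\omega$ is parallel.

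I expect the crux to be this last paragraph: recognizing that $\omega^\dm=0$ is equivalent to $\cov\omega$ being totally skew, and then playing that off against the alternation formula for $d$. Note that the coefficient $p-1$ degenerates precisely at $p=1$, which is exactly why a Killing vector field need not be parallel, so the case split in the statement is forced; the rest is bookkeeping with the identities of Section~\ref{sectionHC}. (The only mildly delicate point in the reduction of the first paragraph is that $i_\phi$ kills functions, which is clear from its definition.)
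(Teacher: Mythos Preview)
Your argument is correct and follows essentially the same route as the paper: invoke~\eqref{ourgoldberg}, evaluate at $1$ to obtain $\delta\omega=0$ and hence $\omega^\dm=0$, use Corollary~\ref{cor:goldberg} for $p=1$, and for $p\ge2$ deduce $2\cov_Z\omega=i_Zd\omega$ from~\eqref{diamond2} and combine it with the alternation formula for $d\omega$ to force $\cov\omega=0$. The only cosmetic differences are that you package the reduction to $\omega^\dm=0$ uniformly before splitting cases, and you conclude via $(p-1)\cov\omega=0$ whereas the paper concludes via $(p-1)\,d\omega=0$ and then reads off $\cov\omega=0$; these are the same computation.
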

\begin{proof}
	Let us consider first the case $p=1$.
	Suppose $\xi=\omega^\#$ is Killing. Then $\lie_\xi g = 0$. By
	Corollary~\ref{cor:goldberg}, we have
	\begin{equation*}
		\omega^\dm =(\lie_\xi g)^\#=0.
	\end{equation*}
		Applying~\eqref{deltaomega}, we get $\delta \omega = -\frac12
	\tr({\omega^\dm})
	=0$.
By~\eqref{goldberg}, we obtain that $\{\delta, \epsilon_\omega\} + \lie_\xi =0$.

Now, suppose that $\left\{\delta, \epsilon_\omega  \right\} + \lie_\xi =0$. Then
from~\eqref{goldberg}
\begin{equation}
	\label{ero}
	\epsilon_{\delta \omega} + i_{(\lie_\xi g)^\#} =0.
\end{equation}
Applying~\eqref{ero} to the constant function with the value $1$, we get $\delta\omega =0$. Thus
$i_{(\lie_\xi g)^\#} =0$.
By
Theorem~\ref{FN}, we have $\lie_\xi g=0$, and thus $\xi$ is a Killing vector field.

Now suppose $p\ge 2$ and $\cov \omega =0$. Then, by looking at defining formulae
one readily sees that $\delta \omega =0$, $d\omega =0$, and $\omega^\cov
=0$. Thus, by~\eqref{ourgoldberg} we get that $[\delta, \epsilon_\omega] +
\lie_{\omega^\#}=0$.

Finally, suppose that $[\delta, \eps_\omega] +\lie_{\omega^\#} =0$. Then,
by~\eqref{ourgoldberg} we have
\begin{equation}
	\label{zero}
	\epsilon_{\delta \omega} -(-1)^p i_{ \omega^\dm} =0.
\end{equation}
Applying~\eqref{zero} to the constant function $1$, we get that $\delta\omega=0$.
Therefore ${i_{\omega^\dm} =0}$ and,  by Theorem~\ref{FN},
we have $\omega^\dm=0$.
Using \eqref{diamond2} and \eqref{omegasharp}, we obtain
\begin{equation*}
	0= \omega^\dm=	\sum_{t=1}^n 2\cov_{X_t} \omega\wdd X_t -
	\sum_{t=1}^n i_{X_t} \omega\wdd X_t  = \sum_{t=1}^n (2\cov_{X_t} \omega
	- i_{X_t} d \omega)\wdd X_t,
\end{equation*}
where $X_1$, \dots, $X_n$ is a local orthonormal frame on $M$. Since
$X_1$, \dots, $X_n$ are linearly independent at every point, we obtain that
\begin{equation*}
	2\cov_{X_t}\omega = i_{X_t} d\omega
\end{equation*}
for all $t$. But this implies
\begin{equation}
	\label{covid}
2\cov_Z \omega = i_Z d\omega
\end{equation}
	for every
vector field $Z$.

 Let $Y_0$, \dots,
$Y_p$ be vector fields.
Then,  by using~\eqref{covid} we get
\begin{align*}
	2(d\omega)(Y_0,\dots, Y_p)&= \sum_{s=0}^p (-1)^s
	(2\cov_{Y_s}\omega)(Y_0,\dots, \widehat{Y_s}, \dots, Y_p)\\
	& =
	\sum_{s=0}^p (-1)^s (i_{Y_s} d\omega)(Y_0,\dots, \widehat{Y_s},\dots,
	Y_p) \\
	&= \sum_{s=0}^p (d\omega)(Y_0,\dots, Y_p) = (p+1)d\omega(Y_0,\dots, Y_p).
\end{align*}
Since $p\not=1$, we obtain $d\omega=0$. Now \eqref{covid} implies
$\cov\omega=0$.
\end{proof}

	\section{Locally conformal K\"ahler manifolds}
	\label{s:lcK}
In this section, we show how Theorem~\ref{main} works in the context of locally conformal K\"ahler manifolds.

	Let $(M^{2n+2},g)$ be a Riemannian manifold and $J$ a complex structure on $M$.
	Then $(M,J,g)$ is called \emph{Hermitian} if $g(JX,JY) = g(X,Y)$ for all
	vector fields $X$, $Y$ on $M$. For an Hermitian manifold $(M,J,g)$, we define its \emph{fundamental $2$-form}
	$\Omega$ by $\Omega(X,Y) = g(X,JY)$. Thus $\Omega^\# = J$. An Hermitian
	manifold $(M,J,g)$ is called \emph{locally conformal K\"ahler}
	(l.c.K.) if there exists a $1$-form $\theta$ (called the \emph{Lee form}) such that
	\begin{equation*}
		d \Omega = \theta \wedge \Omega.
	\end{equation*}
We are going to apply Theorem~\ref{main} to $\omega = \Omega$.
For this we have to compute $\Omega^\dm$ and $\delta \Omega$.
	We define $\eta = i_J \theta$. It is proved in
	\cite[Corollary~1.1]{booktad} that
	\begin{equation*}
		(\cov_X J)Y = \frac12 \left( \eta(Y) X - \theta (Y) JX -
		g(X,Y) \eta^\# - \Omega(X,Y)\theta^\# \right).
	\end{equation*}
Thus
\begin{align*}
	d^\cov J(X,Y) &=(\cov_X J)Y - (\cov_Y J)X \\
                    &= \frac12 \left(\eta(Y) X - \theta(Y) JX - \eta(X) Y +
	\theta(X) JY  - 2 \Omega(X,Y) \theta^\#\right)\\& =  \frac12 (- (\eta \wdd \Id) (X,Y) + (\theta \wdd
	J)(X,Y) ) -  (\Omega \wdd \theta^\#) (X,Y).
\end{align*}
Hence, we get
\begin{equation*}
	d^\cov J = \frac12 ( \theta \wdd J - \eta \wdd \Id) - \Omega\wdd
	\theta^\#.
\end{equation*}
Using the definition of $\#$, it is easy to check  that
\begin{equation}\label{domegasharp2}
	(d\Omega)^\# = (\theta \wedge \Omega)^\# = \Omega \wdd \theta^\# - \theta
	\wdd \Omega^\# = \Omega \wdd \theta^\# - \theta \wdd J.
\end{equation}
Thus by \eqref{diamond1}
\begin{equation}\label{omegadiamond}
\Omega^\dm = 2 d^\cov J + (d\Omega)^\# =  - \eta \wdd \Id -
\Omega\wdd \theta^\# .	
\end{equation}
Moreover, due to~\eqref{trsharp}, by contracting \eqref{domegasharp2}   we get
\begin{equation*}
\tr (\Omega \wdd \theta^\#) = \tr (\theta \wdd J)
\end{equation*}
Hence by~\eqref{deltaomega},  we obtain from~\eqref{omegadiamond}
\begin{equation*}
	\delta \Omega =- \frac12 \tr(\Omega^\dm)
    = \frac12 (\tr (\eta \wdd \Id) + \tr (\Omega \wdd \theta^\#))
    = \frac12 (\tr (\eta \wdd \Id) + \tr (\theta \wdd J)) .
\end{equation*}
Using~\eqref{contraction}, we have
\begin{align*}
	\tr(\eta\wdd \Id) & = - \tr(\Id) \eta + i_\Id \eta = - (2n+2)\eta + \eta
	= -(2n+1)\eta,\\
	\tr(\theta \wdd J) &= - \tr(J)\theta + i_J \theta = \eta.
\end{align*}
Therefore
\begin{equation*}
	\delta \Omega =\frac12 ( \eta - (2n+1) \eta)  = - n \eta.
\end{equation*}
Applying Theorem~\ref{main}, we get the following formula that in a sense generalizes Equation~\eqref{kahler}
 which holds for K\"ahler manifolds.
\begin{theorem}
	\label{thmlck}
Let $(M,J,g)$ be a locally conformal K\"ahler manifold. Let $\Omega$ be the fundamental $2$-form, $\theta$ the Lee $1$-form, and $\eta = i_J \theta$. Then, for any $p$-form $\beta$ we have
\begin{equation}
	\label{eq:lck}
	[\delta,\eps_\Omega]\beta = (p- n) \eta\wedge\beta - \lie_J\beta +
	\Omega\wedge i_{\theta^\#}\beta.
\end{equation}
\end{theorem}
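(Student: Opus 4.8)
The plan is to specialize the generalized Goldberg formula of Theorem~\ref{main} to $\omega = \Omega$. Since $\Omega$ is a $2$-form, the factor $(-1)^p$ in \eqref{ourgoldberg} equals $(-1)^2 = 1$, so that formula becomes the operator identity
\[
[\delta,\eps_\Omega] = \eps_{\delta\Omega} - \lie_{\Omega^\#} - i_{\Omega^\dm}.
\]
All three ingredients on the right have already been computed in the discussion preceding the statement: $\Omega^\# = J$, hence $\lie_{\Omega^\#} = \lie_J$; $\delta\Omega = -n\eta$, hence $\eps_{\delta\Omega}\beta = -n\,\eta\wedge\beta$; and $\Omega^\dm = -\eta\wdd\Id - \Omega\wdd\theta^\#$ by \eqref{omegadiamond}. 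Thus the proof reduces to unwinding $i_{\Omega^\dm}$ as an operator on an arbitrary $p$-form $\beta$.

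For this, first note that $i_\phi$ depends linearly on $\phi$, so $i_{\Omega^\dm} = -i_{\eta\wdd\Id} - i_{\Omega\wdd\theta^\#}$; then \eqref{omegaiphi} gives $i_{\eta\wdd\Id} = \eta\wedge i_{\Id}$ and $i_{\Omega\wdd\theta^\#} = \Omega\wedge i_{\theta^\#}$. The one small computation needed is that $i_{\Id}$ is the degree operator, i.e.\ it acts on $\Omega^p(M)$ as multiplication by $p$: writing out the defining sum of $i_\phi$ with $\phi = \Id$, the $p$ terms indexed by $\shuffle_{1,p-1}$ each contribute $\beta$, the sign of the shuffle being cancelled by the sign needed to restore the natural order of the arguments. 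Hence $i_{\Omega^\dm}\beta = -p\,\eta\wedge\beta - \Omega\wedge i_{\theta^\#}\beta$.

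Substituting, one obtains
\[
[\delta,\eps_\Omega]\beta = -n\,\eta\wedge\beta - \lie_J\beta + p\,\eta\wedge\beta + \Omega\wedge i_{\theta^\#}\beta = (p-n)\,\eta\wedge\beta - \lie_J\beta + \Omega\wedge i_{\theta^\#}\beta,
\]
which is exactly \eqref{eq:lck}. There is no genuine obstacle here: the entire analytic content is carried by Theorem~\ref{main} together with the preliminary locally conformal K\"ahler identities for $\Omega^\#$, $\delta\Omega$, and $\Omega^\dm$. The only points that demand attention are the sign bookkeeping -- in particular keeping the degree $p$ of $\beta$ notationally distinct from the degree $2$ of $\Omega$ when invoking \eqref{ourgoldberg} -- and the elementary fact that $i_{\Id}$ is the degree operator on $\Omega^*(M)$.
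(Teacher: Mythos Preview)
Your proposal is correct and follows exactly the paper's approach: the paper computes $\Omega^\# = J$, $\Omega^\dm = -\eta\wdd\Id - \Omega\wdd\theta^\#$, and $\delta\Omega = -n\eta$ in the preamble to the theorem and then simply says ``Applying Theorem~\ref{main}, we get the following formula,'' leaving the substitution implicit. You have supplied precisely those omitted details --- the use of \eqref{omegaiphi} to rewrite $i_{\Omega^\dm}$ and the identification of $i_{\Id}$ with the degree operator --- and your sign bookkeeping is correct.
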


\section{Quasi-Sasakian manifolds}\label{qSm}

In this section we will show how Theorem~\ref{main} can be used to get
useful formulae for commutators on quasi-Sasakian manifolds.

Recall that  an \emph{almost contact metric} structure on a manifold $M^{2n+1}$ is a
quadruple $(\phi, \xi, \eta, g)$, where $\phi$ is an endomorphism of $TM$,
$\xi$ is a vector field, $\eta$ is a $1$-form, and $g$ is a Riemannian metric
such that
\begin{align*}
	\phi^2 & = -\Id + \eta \otimes \xi, & \eta(\xi) & = 1,\\
	g(\phi X, Y) & = - g(X,\phi Y) , & \eta(X) & = g(X,\xi),
\end{align*}
for any vector fields $X$ and $Y$. As a consequence, one easily gets that $\phi(\xi) = 0$ and $\eta \circ \phi= 0$.
We define an almost complex structure $J$ on $M\times \R$ by
\begin{equation*}
	J\Big(X, f \frac{d}{dt}\Big) = \Big(\phi X - f \xi,  \eta(X)
	\frac{d}{dt} \Big),
\end{equation*}
where $f$ is a smooth function on $M\times \R$.
If $J$ is integrable, the almost contact metric structure $(\phi, \xi, \eta, g)$
on $M$ is called \emph{normal}. We define a $2$-form $\Phi$ by
\begin{equation*}
	\Phi(X,Y) = g (X, \phi Y), \ \mbox{for any } X,Y\in \vf(M).
\end{equation*}
A normal almost contact metric structure $(\phi, \xi, \eta, g)$ on $M$ is called
\emph{quasi-Sasakian} if $\Phi$ is closed.

Let $(M^{2n+1}, \phi, \xi,\eta,g)$ be
a quasi-Sasakian manifold. We define
\begin{equation*}
	A := -\phi \circ \cov \xi.
\end{equation*}
We are going to apply Theorem~\ref{main} to $\omega = \Phi$.
For this we have to compute $\Phi^\#$, $\Phi^\dm$, and $\delta \Phi$.
From the definition of $\Phi$, we have that $\Phi^\# = \phi$.
Since $\Phi$ is closed, from \eqref{diamond1}, we get
\begin{equation*}
	\Phi^\dm = 2 d^\cov \phi.
\end{equation*}
In \cite{kanemaki} it was shown that
\begin{align*}
	(\cov_X \phi) Y & = \eta(Y) AX - g(AX, Y) \xi, & g(AX,Y) & = g(X,AY).
\end{align*}
Thus by \eqref{dcov}, we have
\begin{align*}
(d^\cov \phi) (X,Y) &= (\cov_X \phi)(Y) - (\cov_Y \phi)(X)\\& =
\eta (Y) AX - g(AX,Y) \xi - \eta(X)AY + g(X,AY) \xi\\& = - (\eta \wdd A)(X,Y).
\end{align*}
Therefore
\begin{equation}
	\label{Phidm}
	\Phi^\dm = - 2 \eta \wdd A.
\end{equation}
Further by~\eqref{deltaomega}
\begin{equation}
	\label{deltaPhi}
	\delta \Phi = -\frac12 \tr (\Phi^\dm) = \tr (\eta \wdd A).
\end{equation}
By~\eqref{contraction}, we have
\begin{equation}
	\label{treta}
\tr(\eta\wdd A) = - \eta\wdd \tr(A) + i_A\eta = - \tr(A)\eta + i_A\eta.
\end{equation}
Since $A = - \phi\circ \cov \xi$ and $\eta \circ \phi =0$, combining
\eqref{deltaPhi} and \eqref{treta}, we finally get
\begin{equation*}
	\label{deltaPhi2}
	\delta \Phi = - \tr(A) \eta.
\end{equation*}
Thus by Theorem~\ref{main} and \eqref{Phidm}, we have
\begin{equation*}
		\left[ \delta, \eps_\Phi \right] = -\eps_{\tr(A) \eta} -
		\lie_{\phi} + i_{2 \eta \wdd A}.
	\end{equation*}
	Since $A$ is an endomorphism of $TM$, we actually have $\tr(A) = \Tr(A)$.
Hence we have proved the following result.
\begin{theorem}
	\label{thmqS}
	Let $(M, \phi,\xi,\eta,g)$ be a quasi-Sasakian manifold. Then
	\begin{equation}
		\label{qS2}
			\left[ \delta, \eps_\Phi \right]  = - \Tr(A) \eps_\eta - \lie_\phi + 2 \eps_\eta i_A.
	\end{equation}
\end{theorem}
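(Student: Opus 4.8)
The plan is to specialize the Generalized Goldberg Formula of Theorem~\ref{main} to the fundamental $2$-form $\omega=\Phi$. Since $\Phi$ has degree $p=2$, the sign $(-1)^p$ is trivial and Equation~\eqref{ourgoldberg} reads
\begin{equation*}
	\left[ \delta, \eps_\Phi \right] = \eps_{\delta\Phi} - \lie_{\Phi^\#} - i_{\Phi^\dm},
\end{equation*}
so the whole argument reduces to identifying the three ingredients $\Phi^\#$, $\Phi^\dm$, and $\delta\Phi$. The first is immediate: from $\Phi(X,Y)=g(X,\phi Y)$ and the definition of $\#$ in~\eqref{omegasharp} one reads off $\Phi^\#=\phi$, hence $\lie_{\Phi^\#}=\lie_\phi$.

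Next I would compute $\Phi^\dm$. Since a quasi-Sasakian structure has $\Phi$ closed, formula~\eqref{diamond1} collapses to $\Phi^\dm = 2\,d^\cov(\Phi^\#) = 2\,d^\cov\phi$. To evaluate $d^\cov\phi$ I would invoke the known expression for the covariant derivative of $\phi$ on a quasi-Sasakian manifold from~\cite{kanemaki}, namely $(\cov_X\phi)Y=\eta(Y)AX-g(AX,Y)\xi$ with $A=-\phi\circ\cov\xi$ symmetric; antisymmetrizing as in~\eqref{dcov} the $\xi$-terms cancel by symmetry of $A$, and the result is recognised as $-\eta\wdd A$. Thus $\Phi^\dm=-2\,\eta\wdd A$, and consequently $-i_{\Phi^\dm}=i_{2\,\eta\wdd A}$.

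It then remains to compute $\delta\Phi$. By~\eqref{deltaomega} we have $\delta\Phi=-\frac12\tr(\Phi^\dm)=\tr(\eta\wdd A)$, and the contraction identity~\eqref{contraction} applied to the $1$-form $\eta$ and the $TM$-valued $1$-form $A$ gives $\tr(\eta\wdd A)=-\tr(A)\eta+i_A\eta$. Here $i_A\eta=0$, because $\eta\circ\phi=0$ forces $\eta(AX)=-(\eta\circ\phi)(\cov_X\xi)=0$; hence $\delta\Phi=-\tr(A)\eta$. Since $A$ is a genuine endomorphism of $TM$, its Fr\"olicher--Nijenhuis trace $\tr(A)$ coincides with the ordinary trace $\Tr(A)$, so $\eps_{\delta\Phi}=-\Tr(A)\eps_\eta$.

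Assembling the three pieces yields
\begin{equation*}
	\left[ \delta, \eps_\Phi \right] = -\Tr(A)\,\eps_\eta - \lie_\phi + i_{2\,\eta\wdd A},
\end{equation*}
and a final application of~\eqref{omegaiphi} rewrites $i_{2\,\eta\wdd A}=2\,\eta\wedge i_A=2\eps_\eta i_A$, which is precisely~\eqref{qS2}. No step is genuinely hard; the only substantive input is the formula for $\cov\phi$ from~\cite{kanemaki}, and the one place demanding care is bookkeeping the interaction of the sign $(-1)^p$ in~\eqref{ourgoldberg} (here trivial) with the sign in the contraction identity~\eqref{contraction}, together with the small but crucial observation $i_A\eta=0$ that makes $\delta\Phi$ collapse to a multiple of $\eta$.
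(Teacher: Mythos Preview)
Your proposal is correct and follows essentially the same argument as the paper: you specialize~\eqref{ourgoldberg} to $\omega=\Phi$, use $d\Phi=0$ and Kanemaki's formula to obtain $\Phi^\dm=-2\,\eta\wdd A$, then compute $\delta\Phi=-\Tr(A)\eta$ via~\eqref{deltaomega} and~\eqref{contraction} together with $i_A\eta=0$, and finally rewrite $i_{2\eta\wdd A}$ as $2\eps_\eta i_A$ via~\eqref{omegaiphi}. The only cosmetic difference is that you make the last rewriting step explicit, whereas the paper leaves it implicit.
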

The most important examples
of quasi-Sasakian manifolds are co-K\"ahler manifolds (see \cite{survey}) and
Sasakian manifolds (see \cite{boyer}).
For every co-K\"ahler manifold, one has $\cov \xi =0$ and thus $A=0$. Therefore
in co-K\"ahler case, we get
\begin{equation*}
	[\delta ,\eps_\Phi] = - \lie_\phi,
\end{equation*}
which could also have been achieved by using the fact that $\phi$
is parallel on a co-K\"ahler manifold and Theorem~\ref{paralelity}.

For Sasakian manifold, one has $\cov \xi = - \phi$, and thus $A = \phi^2 = -\Id
+ \eta \wdd \xi$.  Therefore $\Tr A = -2n$ in this case.
Applying Theorem~\ref{thmqS}, we get
\begin{equation}
	\label{S}
	\begin{aligned}
		\left[ \delta, \eps_\phi \right] & = 2n \eps_\eta -\lie_\Phi +
		2\eps_\eta
		(-i_\Id + \eps_\eta i_\xi) \\&= 2n \eps_\eta -\lie_\phi -
		2\eps_\eta
		i_{\Id}.
	\end{aligned}
	\end{equation}
	The formula \eqref{S} was first proved by Fujitani in~\cite{fujitani} by
	complicated computation in local coordinates. This formula was crucial
	for some proofs in our recent article \cite{jdg} on Hard Lefschetz
	Theorem for Sasakian manifolds.
	We hope that Theorem~\ref{thmqS} will permit us to find a suitable
	generalization of Hard Lefschetz Theorem for quasi-Sasakian manifolds.

\end{document}